\documentclass[a4paper,11pt,reqno,twoside]{amsart}

\usepackage{amsmath}
\usepackage{amsfonts}
\usepackage{amssymb}
\usepackage{amsthm}
\usepackage{color}
\usepackage{ifpdf}
\usepackage{array}
\usepackage{url}
\usepackage{multirow,bigdelim}
\usepackage{ascmac}

\addtolength{\textheight}{2cm}
\addtolength{\topmargin}{-1.5cm}
\addtolength{\textwidth}{2cm}
\addtolength{\oddsidemargin}{-1cm}
\addtolength{\evensidemargin}{-1cm}

\numberwithin{equation}{section}

\newtheorem{theorem}{Theorem}[section]

\newcommand*{\C}{\mathbb{C}}%.............................C
\newcommand*{\R}{\mathbb{R}}%.............................R
%.............................Q
\newcommand*{\Z}{\mathbb{Z}}%.............................Z
%.............................N
%...................finite field of q elements

\newcommand{\comment}[1]{}
%................................
%\def\baselinestretch{2}%.espacement des lignes
%................................
\title[A probabilistic interpretation for central zeros of $L$-functions]%
      {A probabilistic interpretation for central zeros of $L$-functions in the Selberg class} 
\author[T. Nakamura]{Takashi Nakamura}
\author[M. Suzuki]{Masatoshi Suzuki}
\date{Version of \today}
%................................
\subjclass[]{
11M26 %Number Theory: Nonreal zeros of $\zeta (s)$ and $L(s, \chi)$; Riemann and other hypotheses
60E07 %Infinitely divisible distributions; stable distributions
}
\keywords{
infinitely divisible characteristic function; 
L{\'e}vy-Khintchine formula; 
$L$-functions in the Selberg class; 
Grand Riemann Hypothesis; 
central zeros}
\AtBeginDocument{%
%\mathtoolsset{showonlyrefs,mathic = true}
\begin{abstract}
%
%In this paper, 
We show that central zeros of $L$-functions in the Selberg class 
have a probabilistic interpretation by stating an equivalence condition 
of the Riemann hypothesis for the $L$-functions 
in terms of infinitely divisible distributions.
\end{abstract}
\maketitle
%\tableofcontents % Table of Contents
}
%..........................................
\begin{document}
%..........................................

%%%%%%%%%%%%%%%%%%%%%%%%%%%%%%%%%%%%%%%%%%%%%%%%%%%%%%%%%%%%%%%%
%
\section{Introduction} 
%
%%%%%%%%%%%%%%%%%%%%%%%%%%%%%%%%%%%%%%%%%%%%%%%%%%%%%%%%%%%%%%%%

The study of central values or central zeros of $L$-functions 
is a significant subject in number theory. 
In fact, 
there are deep arithmetic reasons for central zeros of $L$-functions, 
represented by the Birch and Swinnerton-Dyer conjecture, 
except for trivial reasons such as the negative sign of the functional equation.  
But in addition, in this paper, 
we propose a new perspective on the study of central values of $L$-functions 
by showing that the central zeros have a probabilistic interpretation. 
\medskip

In probability theory, the L{\'e}vy processes are the most fundamental stochastic processes, 
and they are described by infinitely divisible distributions. 
As we will briefly review in Section \ref{section_2_0}, 
typical infinitely divisible distributions are 
Gaussian distributions with nonzero Gaussian covariance and 
compound Poisson distributions with zero Gaussian covariance.
In \cite{NaSu23}, 
we proved that an infinitely divisible distribution arises from 
the Riemann zeta-function $\zeta(s)$ 
if assuming the Riemann hypothesis and vice versa. 
In this case, the distribution is compound Poisson. 
For other probabilistic studies related to the Riemann zeta-function, 
see the introduction of  \cite{NaSu23}, the literature referenced therein, 
and also Section \ref{section_3}. 

The Selberg class $\mathcal{S}$, introduced by A. Selberg in 1992, 
is a large class of Dirichlet series 
including 
the Riemann zeta-function, 
Dirichlet $L$-functions, and $L$-functions of holomorphic cusp forms, 
and is expected to include all important zeta and $L$-functions in number theory 
(see Section \ref{section_2_1}).  
The purpose of this paper is to show that 
if we generalize the above relation between the infinitely divisible distribution 
and the Riemann zeta-function to $L$-functions in the Selberg class $\mathcal{S}$, 
then we obtain infinitely divisible distributions with nonzero Gaussian covariance. 
A significant nature of the Selberg class is that for any $F \in \mathcal{S}$, 
all nontrivial zeros of $F$ are conjectured to 
lie on the central line of the functional equation between $F(s)$ and $\overline{F(1-\bar{s})}$ (\eqref{s202} below). 
This conjecture is often called the {\it Grand Riemann Hypothesis} (GRH, for short). 
The zero at the central point $s=1/2$ is called the {\it central zero} and 
is the only real nontrivial zero of $F$ under the GRH. 
The main theorem (Theorem \ref{thm_2_1} below) asserts that 
if $F \in \mathcal{S}$ has the central zero and satisfies the GRH, 
then it generates an infinitely divisible distribution with nonzero Gaussian covariance. 
It is interesting that the properties of the stochastic object 
corresponding to $F$ change greatly depending on the presence or absence of the central zero. 
\bigskip

In the following, we first review 
the definition of infinitely divisible distributions in Section \ref{section_2_0} 
and 
the definition of the Selberg class in Section \ref{section_2_1}. 
Then, we  state the precise statement and proof of the main results in Section \ref{section_2_2} 
and prove them in Section \ref{section_2_3}. 
Furthermore, we present concrete examples of $F \in \mathcal{S}$ having central zeros 
and make some remarks on infinitely divisible distributions and L\'evy processes 
constructed by zeta or $L$-functions belonging to the Selberg class in Section \ref{section_3}.

\section{Statements and proofs of results}

\subsection{Infinitely divisible distributions} \label{section_2_0} 

A distribution $\mu$ on the real line is said to be {\it infinitely divisible}  
if there exists a distribution $\mu_n$ on the real line 
such that $\mu$ is the $n$-hold convolution of $\mu_n$,  
$\mu=\mu_n \ast \dots \ast \mu_n$, 
for every positive integer $n$. 
For any infinitely divisible distribution $\mu$, 
there exists a triplet $(a,b,\nu)$ 
consisting of $a \in \R_{\geq 0}$, $b \in \R$ 
and a measure $\nu$ on $\R$ 
such that the characteristic function 
$
\widehat{\mu}(t)=\int_{-\infty}^{\infty} e^{itx} \mu(dx)
$
has the L{\'e}vy--Khintchine formula 
\begin{equation} \label{LK_1}
\widehat{\mu}(t) = \exp\left[
- \frac{1}{2} a t^2 + i b t 
+ \int_{-\infty}^{\infty}\left(
e^{it\lambda}  - 1 - \frac{it\lambda}{1+\lambda^2} 
\right) \nu(d \lambda)
\right], 
\end{equation}
\begin{equation} \label{LK_2}
\nu(\{0\})=0, \quad \int_{-\infty}^{\infty} \min(1,\lambda^2) \,\nu(d\lambda) < \infty
\end{equation}
(\cite[Theorem 8.1 and Remark 8.4]{Sa99}). 
In the triplet $(a,b,\nu)$ in \eqref{LK_1}, 
$a$, $b$, and $\nu$ are called the {\it Gaussian covariance}, {\it center}, 
and {\it L{\'e}vy measure} of $\mu$, respectively. 
If the L{\'e}vy measure $\nu$ satisfies $\int_{|\lambda|\leq 1} |\lambda|\,\nu(d\lambda)<\infty$, 
then \eqref{LK_1} can be rewritten as
\begin{equation} \label{LK_3}
\widehat{\mu}(t) = \exp\left[
- \frac{1}{2} a t^2 + i b_0 t 
+ \int_{-\infty}^{\infty}(e^{it\lambda}  - 1) \, \nu(d \lambda)
\right]
\end{equation}
(\cite[(8.7)]{Sa99}), where $b_0 \in \R$ is called the {\it drift} of $\mu$.
A function of the form \eqref{LK_1} or \eqref{LK_3} 
is often called an infinitely divisible characteristic function. 
If $\widehat{\mu_1}(t)$ and $\widehat{\mu_2}(t)$ are infinitely divisible characteristic functions, 
then the product $\widehat{\mu_1}(t)\widehat{\mu_2}(t)$ is also an infinitely divisible characteristic function 
(\cite[Lemma 7.4]{Sa99}). 
The L{\'e}vy measure $\nu$ is zero if $\mu$ is a Gaussian distribution. 
In contrast, the Gaussian covariance $a$ and drift $b_0$ in (\ref{LK_3}) are zero 
if $\mu$ is a compound Poisson distribution (\cite[Examples 8.5]{Sa99}). 
The difference between these two extreme distributions is rather evident 
if we visualize the L{\'e}vy process associated with $\mu$.
\bigskip 

The simplest infinitely divisible characteristic function is written as (\ref{LK_1}) 
with $a=0$, $b \not=0$, and $\nu=0$. 
Then the corresponding L\'evy process is 
nothing but a linear function (\cite[p.~109 and Fig.~2.1]{EbKa19}).
The L\'evy process constructed by an infinitely divisible characteristic function 
with $a\not=0$, $b=b_0=0$, and $\nu=0$ in (\ref{LK_1}) or (\ref{LK_3}) 
is called Brownian motion, which is used to model particle movements 
(\cite[p.~113 and Fig.~2.6]{EbKa19} or \cite[Corollary 11.8]{Sa99}). 
In addition, the L\'evy process constructed by an infinitely divisible characteristic function 
with $a=b_0=0$ and $\nu\not=0$ in (\ref{LK_3}) is called a compound Poisson process, which plays important role in risk theory (\cite[p.~111 and Fig.~2.4]{EbKa19} or \cite[p.~19]{Sa99}). 
In short, these three L\'evy processes are characterized by straight line, isolated jumps, 
and continuous wiggling, respectively.

\subsection{The Selberg class} \label{section_2_1} 

The Selberg class $\mathcal{S}$ consists of the Dirichlet series 
\begin{equation} \label{s201}
F(s) = \sum_{n=1}^{\infty} \frac{a_F(n)}{n^s}
\end{equation}
satisfying the following five axioms: 
\begin{enumerate}
\item[(S1)] The Dirichlet series \eqref{s201} converges absolutely if $\Re(s)>1$. 
\item[(S2)] There exists an integer $m \geq 0$ 
such that $(s - 1)^mF(s)$ extends to an entire function of finite order. 
The smallest $m$ is denoted by $m_F$. 
\item[(S3)] $F$ satisfies the functional equation
\begin{equation} \label{s202}
\xi_F(s)=\omega \, \overline{\xi_F(1-\bar{s})},
\end{equation}
where
\begin{equation} \label{s203}
\aligned 
\xi_F(s) 
& = s^{m_F}(s-1)^{m_F}Q^s \prod_{j=1}^{r} \Gamma(\lambda_{j} s + \mu_{j}) \, F(s), 
\endaligned 
\end{equation}
$\Gamma(s)$ is the gamma function,  
and $r \geq 0$, $Q > 0$, $\lambda_{j} > 0$, $\mu_{j} \in \C$ with $\Re(\mu_{j})\geq 0$, 
$\omega \in \C$ with  $|\omega| = 1$ are parameters depending on $F$. 
\item[(S4)] For every $\varepsilon > 0$, $a_F(n) \ll_\varepsilon n^\varepsilon$. 
\item[(S5)] 
$\log F(s) = \sum_{n=1}^{\infty} b_F(n)\,n^{-s}$,  
where $b_F(n) = 0$ unless $n = p^m$ with $m \geq 1$, and $b_F(n) \ll n^\theta$ for some $\theta < 1/2$.
\end{enumerate}
The class $\mathcal{S}$ is a multiplicative semigroup by definition, 
that is, if $F_1$ and $F_2$ belong to $\mathcal{S}$, 
then the product $F_1F_2$ also belongs to $\mathcal{S}$. 
The Riemann zeta-function $\zeta(s)$ is an element of the Selberg class. 
In fact, 
(S1) and (S4) hold by $a_\zeta(n)=1$, 
(S2) holds for $m_\zeta=1$ because $\zeta(s)$ has a simple pole at $s=1$,  
(S3) holds for $r=1$, $Q=\pi^{-1/2}$, $\lambda_1=1/2$, $\mu_1=0$, 
and $\omega=1$ 
by $\xi_\zeta(s)=\xi_\zeta(1-s)$ (\cite[(2.1.13)]{Tit86}), and 
(S5) holds by $b_\zeta(1)=0$ and $b_\zeta(n)=\Lambda(n)/\log n$ for $n \geq 2$ (\cite[(1.1.9)]{Tit86}). 

From (S3) and (S5), 
every $F \in \mathcal{S}$ has no zeros outside the critical strip $0 \leq \Re(s) \leq 1$ 
except for zeros in the left half-plane $\Re(s)  \leq 0$ located at poles of the involved gamma factors. 
A zero lies in the critical strip is called a {\it nontrivial zero} 
if and only if it is a zero of the entire function $\xi_F(s)$ in \eqref{s203}. 
There are infinitely many nontrivial zeros unless $F \equiv 1$. 
The Grand Riemann Hypothesis for $F \in \mathcal{S}$ 
mentioned in the introduction is the assertion that 
all nontrivial zeros of $F(s)$ lie on the line $\Re(s)=1/2$, 
which is equivalent to all the zeros of $\xi_F(1/2-iz)$ being real. 
The survey \cite{Pe04, Pe05} and its sequel are good introductions to 
the theory of the Selberg class. 

\subsection{Main Results} \label{section_2_2} 

As a generalization of $g_\zeta(t)$ used in \cite{NaSu23} 
for associating $\zeta(s)$ to an infinitely divisible distribution under the Riemann hypothesis, 
we define the function $g_F(t)$ on the real line for $F \in \mathcal{S}$ by 
\begin{equation} \label{s204} 
g_F(t):= 
- \frac{m_0}{2}\,t^2
+ iB_F t
 + \sum_{\gamma\not=0} m_\gamma \frac{e^{-i\gamma t}-1}{\gamma^2}, 
\quad B_F := i\frac{\xi_F^\prime}{\xi_F}\left(\frac{1}{2}\right)
\end{equation}
referring to \cite[(1.7)]{NaSu23}, 
where $\gamma$ runs through all nonzero zeros of $\xi_F(1/2-iz)$ without counting multiplicity 
and $m_\gamma$ is the multiplicity of $\gamma$. 
Note that $m_0$ is the multiplicity of the central zero and this term is separated. 
The right-hand side of \eqref{s204} converges absolutely and uniformly 
on any compact subsets of $\R$, 
as shown similarly to the proof of \cite[Lemma 2.1]{NaSu23}, 
since $\xi_F(s)$ is an entire function of order one (\cite[p.29]{Pe05}). 
Therefore, $g_F(t)$ is a continuous function on the real line. 
Using $g_F(t)$, the main results are stated as follows. 

\begin{theorem} \label{thm_2_1}
Let $g_F(t)$ be the function on the real line defined by \eqref{s204} for $F \in \mathcal{S}$. 
Then, the following two statements are equivalent: 
\begin{enumerate}
\item the Grand Riemann Hypothesis for $F$ is true; 
\item the function $\exp(g_F(t))$ on the real line is 
a characteristic function of 
an infinitely divisible distribution on the real line. 
\end{enumerate}
\end{theorem}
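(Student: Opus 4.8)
The plan is to treat the two implications separately. \emph{The implication $(1)\Rightarrow(2)$} is a direct application of the L\'evy--Khintchine formula. Assuming the Grand Riemann Hypothesis for $F$, every zero $\gamma$ of $\xi_F(1/2-iz)$ is real, and one reads off from \eqref{s204} that $g_F(t)$ is exactly of the form \eqref{LK_3} with Gaussian covariance $a=m_0\ge 0$, drift $b_0=B_F$, and L\'evy measure $\nu=\sum_{\gamma\neq 0}(m_\gamma/\gamma^{2})\,\delta_{-\gamma}$, a nonnegative discrete measure on $\R\setminus\{0\}$. Two points have to be verified. First, $\nu$ is a genuine L\'evy measure with $\int_{|\lambda|\le 1}|\lambda|\,\nu(d\lambda)<\infty$: since $\xi_F$ has order one, $\sum_{\gamma\neq 0}m_\gamma/|\gamma|^{2}<\infty$, and only finitely many $\gamma$ satisfy $|\gamma|\le 1$, which yields both $\int\min(1,\lambda^{2})\,\nu(d\lambda)<\infty$ and $\int_{|\lambda|\le 1}|\lambda|\,\nu(d\lambda)=\sum_{0<|\gamma|\le 1}m_\gamma/|\gamma|<\infty$. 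Second, $b_0=B_F\in\R$: differentiating the functional equation \eqref{s202} and then conjugating shows that the logarithmic derivative $\xi_F'/\xi_F$ (or its finite part at $s=1/2$ when $m_0\ge 1$) takes a purely imaginary value at the central point, so $B_F$ is real. With these facts, \cite[Theorem 8.1 and (8.7)]{Sa99} shows that $\exp(g_F(t))$ is the characteristic function of an infinitely divisible distribution, whose Gaussian covariance $m_0$ is nonzero precisely when $F$ has a central zero.

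\emph{The implication $(2)\Rightarrow(1)$} I would prove by contraposition. Suppose some zero of $\xi_F(1/2-iz)$ is nonreal. Since the zero set is symmetric under complex conjugation (again by \eqref{s202}), $b^{*}:=\sup\{\Im\gamma:m_\gamma>0\}$ satisfies $0<b^{*}\le 1/2$, the upper bound holding because the zeros lie in the critical strip. The plan is to examine $\Re g_F(t)$ as $t\to+\infty$. Using $\Re(iB_Ft)=0$ (since $B_F\in\R$) and grouping the sum in \eqref{s204} by the value of $\Im\gamma$: the zeros with $\Im\gamma=b^{*}$ contribute $e^{b^{*}t}\,\Re f(t)$, where $f(t)=\sum_{\Im\gamma=b^{*}}(m_\gamma/\gamma^{2})\,e^{-i\Re(\gamma)t}$ is a finite, nonzero exponential sum; the zeros with $\Im\gamma<b^{*}$ contribute at most $e^{b^{*}t}\eta(\varepsilon)+O\!\big(e^{(b^{*}-\varepsilon)t}\big)$ in absolute value for every $\varepsilon>0$, with $\eta(\varepsilon)\to 0$ as $\varepsilon\to 0$ by dominated convergence against the convergent series $\sum m_\gamma/|\gamma|^{2}$; and the Gaussian term contributes only $-\tfrac{m_0}{2}t^{2}$. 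Now distinguish cases according to the real almost-periodic function $\Re f$. If $\Re f$ is positive somewhere, it exceeds a fixed positive constant on an unbounded set of $t$, and for small $\varepsilon$ this forces $\Re g_F(t)\to+\infty$ there; then $|\exp(g_F(t))|$ is unbounded, so $\exp(g_F(t))$ is not even a characteristic function. If instead $\Re f\le 0$ everywhere but $\Re f\not\equiv 0$, its mean value is strictly negative (a real almost-periodic function that is $\le 0$ with mean value $0$ vanishes identically), and this mean value equals $\sum_{\Im\gamma=b^{*},\,\Re\gamma=0}\Re(m_\gamma/\gamma^{2})=-m_{ib^{*}}/(b^{*})^{2}$, so $ib^{*}$ must be a zero, i.e.\ $s=1/2+b^{*}$ is a real zero of $F$; then $\Re f$ stays below a negative constant on an unbounded set, whence $\Re g_F(t)\to-\infty$ there faster than any polynomial. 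But the real part of a L\'evy--Khintchine exponent \eqref{LK_1} is bounded below by $-Ct^{2}$ with $C$ depending only on the triplet, because $a\ge 0$ and $\int(\cos t\lambda-1)\,\nu(d\lambda)\ge -\tfrac{t^{2}}{2}\int_{|\lambda|\le 1}\lambda^{2}\,\nu(d\lambda)-2\nu(\{|\lambda|>1\})$; so $\exp(g_F(t))$ is not an infinitely divisible characteristic function. In either case, (2) fails.

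The step I expect to be the main obstacle is making this asymptotic analysis rigorous in full generality: controlling the interference between the oscillation of the leading term $f$ and the accumulated lower-order terms, and dealing with the degenerate configurations left aside above --- when $b^{*}$ is approached but not attained, or when $\Re f\equiv 0$ while $f\not\equiv 0$ --- where one should pass to the next-highest imaginary part or use a smoothing, uniform-continuity, or Laplace-transform variant in place of the finite exponential sum. The two-sided nature of the obstruction also explains why both halves of hypothesis (2) are used: nonreal zeros of $F$ with $\Re(s)\ne 1/2$ are already obstructed by the bound $|\exp(g_F(t))|\le 1$ valid for any characteristic function, whereas a real zero $s\ne 1/2$ becomes visible only through the polynomial lower bound $|\exp(g_F(t))|\ge e^{-Ct^{2}}$ that infinite divisibility provides.
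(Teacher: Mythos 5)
Your proof of $(1)\Rightarrow(2)$ is essentially the paper's: under the GRH all $\gamma$ are real, \eqref{s204} is literally of the form \eqref{LK_3} with triplet $(m_0,B_F,\nu_F)$, the L\'evy-measure conditions follow from $\sum_{\gamma\neq0}m_\gamma|\gamma|^{-2}<\infty$ (order one of $\xi_F$), and \cite[Theorem 8.1]{Sa99} applies. Your extra check that $B_F\in\R$ is a point the paper also records (via $B_F=-B_{F^\ast}$).

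For $(2)\Rightarrow(1)$ you take a genuinely different route, and the obstacle you flag at the end is a genuine gap, not a technicality. Your whole case analysis presupposes that $b^\ast=\sup\{\Im\gamma\}$ is \emph{attained}, so that the top layer $f(t)=\sum_{\Im\gamma=b^\ast}(m_\gamma/\gamma^{2})e^{-i\Re(\gamma)t}$ is a nonzero almost periodic function (it need not be a \emph{finite} sum, though uniform convergence of $\sum m_\gamma|\gamma|^{-2}$ keeps it uniformly almost periodic) and everything below contributes $o(e^{b^\ast t})$. Nothing in axioms (S1)--(S5) forbids a sequence of zeros $\gamma_n$ with $\Im\gamma_n\uparrow b^\ast$ and no zero on the line $\Im z=b^\ast$; in that case there is no leading almost periodic term, the near-top contribution is a superposition of infinitely many distinct exponential rates accumulating at $b^\ast$, and the $\eta(\varepsilon)$-splitting gives no lower bound on its size or sign. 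By contrast, your other worry --- $\Re f\equiv0$ with $f\not\equiv0$ --- is vacuous: since $\overline{(-x+ib^\ast)^{2}}=(x+ib^\ast)^{2}$, the Fourier--Bohr coefficient of $\Re f$ at frequency $-x$ equals $\bigl(m_{x+ib^\ast}+m_{-x+ib^\ast}\bigr)/(x+ib^\ast)^{2}$ up to a factor $1/2$, and nonnegativity of multiplicities forces both to vanish.

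The paper closes exactly this gap by arguing complex-analytically rather than asymptotically. From $|\widehat{\mu}(t)|\le1$ it gets $\Re g_F\le0$; the polynomial bound $|g_F(t)|=|\log\widehat{\mu}(t)|=O(t^{2})$ from the L\'evy--Khintchine formula excludes real zeros in $(1/2,1]$ (this is your Case B); and then the Laplace-transform identity \eqref{s207}, combined with a Landau-type theorem for Laplace transforms of the nonnegative function $-\Re g_F$, shows that the abscissa of convergence would be a singularity on the positive imaginary $z$-axis, i.e.\ a real zero of $FF^\ast$ in $(1/2,\infty)$, which has just been excluded; hence $z^{-2}(\xi_{FF^\ast}'/\xi_{FF^\ast})(1/2-iz)$ is analytic in $\Im z>0$, so $\xi_{FF^\ast}(1/2-iz)$ has no zeros there, and conjugation symmetry finishes. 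The moral is that the Laplace transform detects \emph{every} zero of the upper half-plane as a pole, with no need to isolate a topmost one. If you want to repair your argument while staying self-contained, note that your own two-sided estimate already suffices: infinite divisibility gives $|g_F(t)|=O(1+t^{2})$ (real and imaginary parts alike), so $\int_0^\infty g_F(t)e^{izt}\,dt$ is analytic on all of $\Im z>0$ and coincides with $z^{-2}(\xi_F'/\xi_F)(1/2-iz)$ there by continuation from \eqref{s210}; a zero $\gamma$ with $\Im\gamma>0$ would be a pole of the right-hand side, a contradiction, and the symmetry $\gamma\mapsto\bar{\gamma}$ of the zero set handles the lower half-plane.
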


Note that Theorem \ref{thm_2_1} (2) is consistent with 
the multiplicative property of infinitely divisible characteristic functions, 
since $g_{F_1F_2}(t)=g_{F_1}(t)+g_{F_2}(t)$ for $F_1$, $F_2 \in \mathcal{S}$
by definitions \eqref{s203} and \eqref{s204}. 

\begin{theorem} \label{thm_2_2} 
Let $g_F(t)$ be the function on the real line defined by \eqref{s204} for $F \in \mathcal{S}$. 
Assume that the GRH for $F$ is true and define 
\begin{equation} \label{Levy_2}
\nu_F(d\lambda) = \sum_{\gamma\not=0} \frac{m_\gamma}{\gamma^2} \, \delta_{-\gamma}(d\lambda),
\end{equation}
where the sum and $m_\gamma$ have the same meaning as in \eqref{s204}.
Then the L{\'e}vy--Khintchine formula \eqref{LK_3} 
of $\exp(g_F(t))$ holds for the triplet 
$(a,b_0,\nu)=(m_0,B_F,\nu_F)$. 
\end{theorem}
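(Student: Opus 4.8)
The plan is to observe that the definition \eqref{s204} already exhibits $g_F$ in the shape of the exponent in \eqref{LK_3}, so the substantive work is to check that the proposed triplet $(m_0,B_F,\nu_F)$ is admissible --- namely that $m_0\ge 0$, that $B_F\in\R$, and that $\nu_F$ is a L\'evy measure satisfying moreover $\int_{|\lambda|\le 1}|\lambda|\,\nu_F(d\lambda)<\infty$ --- and then to invoke Theorem \ref{thm_2_1} to upgrade the resulting formal identity to the statement that \eqref{LK_3} is the L\'evy--Khintchine representation of a genuine infinitely divisible characteristic function.

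First I would feed in the hypothesis. By the GRH for $F$ and the reformulation recalled in Section \ref{section_2_1}, every zero of $\xi_F(1/2-iz)$ is real, so each $\gamma$ occurring in \eqref{s204} and \eqref{Levy_2} is a nonzero real number, each $\delta_{-\gamma}$ is a point mass on $\R\setminus\{0\}$, and $\nu_F$ is a well-defined nonnegative measure on $\R$ with $\nu_F(\{0\})=0$. Since $\xi_F$ is entire of order one and, excluding the trivial case $F\equiv 1$ (where $g_F\equiv 0$), not identically zero, its zeros are isolated with no finite accumulation point and satisfy $\sum_{\gamma\ne 0}m_\gamma/\gamma^2<\infty$; this is exactly the estimate behind the absolute convergence of \eqref{s204} noted after its definition, and it may alternatively be read off from the zero-counting bound $N_F(T)\ll T\log T$ for the Selberg class. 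Consequently only finitely many $\gamma$ satisfy $0<|\gamma|\le 1$, whence $\int_{-\infty}^{\infty}\min(1,\lambda^2)\,\nu_F(d\lambda)=\sum_{0<|\gamma|\le 1}m_\gamma+\sum_{|\gamma|>1}m_\gamma/\gamma^2<\infty$ and $\int_{|\lambda|\le 1}|\lambda|\,\nu_F(d\lambda)=\sum_{0<|\gamma|\le 1}m_\gamma/|\gamma|<\infty$, so $\nu_F$ obeys \eqref{LK_2} together with the extra integrability under which \eqref{LK_1} may be recast as \eqref{LK_3}. Moreover $m_0$ is a nonnegative integer, and the reality of $B_F$ (needed so that $b_0=B_F\in\R$) follows from the functional equation \eqref{s202}: evaluating its logarithmic derivative, with the central-zero pole part removed, at the point $s=1/2$ fixed by $s\mapsto 1-\bar s$ shows that the relevant quantity is purely imaginary, as in \cite{NaSu23}.

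With admissibility in hand, substituting $\lambda=-\gamma$ in \eqref{Levy_2} gives $\int_{-\infty}^{\infty}(e^{it\lambda}-1)\,\nu_F(d\lambda)=\sum_{\gamma\ne 0}(m_\gamma/\gamma^2)(e^{-i\gamma t}-1)$, so comparing term by term with \eqref{s204} yields $-\tfrac{1}{2}m_0 t^2+iB_F t+\int_{-\infty}^{\infty}(e^{it\lambda}-1)\,\nu_F(d\lambda)=g_F(t)$ for all $t\in\R$; that is, $\exp(g_F(t))$ equals the right-hand side of \eqref{LK_3} for the triplet $(a,b_0,\nu)=(m_0,B_F,\nu_F)$. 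Finally, Theorem \ref{thm_2_1} ensures that, under the GRH, $\exp(g_F(t))$ is the characteristic function of an infinitely divisible distribution on $\R$; since its L\'evy measure satisfies the stronger integrability just checked, its representation takes the form \eqref{LK_3}, and by the uniqueness of the L\'evy--Khintchine triplet this representation holds with $(m_0,B_F,\nu_F)$, as claimed. I do not expect a genuine obstacle: once Theorem \ref{thm_2_1} is available the argument is essentially bookkeeping, and the only quantitative ingredient is the convergence $\sum_{\gamma\ne 0}m_\gamma/\gamma^2<\infty$ coming from $\xi_F$ having order one --- the very input already used to make sense of \eqref{s204} --- together with the observation that only finitely many zeros lie in $|\gamma|\le 1$.
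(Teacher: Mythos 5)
Your proposal is correct and follows essentially the same route as the paper: verify that $(m_0,B_F,\nu_F)$ is an admissible triplet (using that all $\gamma$ are real under the GRH and that $\sum_{\gamma\neq 0}m_\gamma|\gamma|^{-2}<\infty$ because $\xi_F$ has order one), match the terms of \eqref{s204} with the exponent in \eqref{LK_3}, and appeal to the existence/uniqueness theory of the L\'evy--Khintchine representation. The only cosmetic difference is that the paper invokes \cite[Theorem 8.1 (iii)]{Sa99} directly to produce the distribution with this triplet (proving Theorem \ref{thm_2_1}(1)$\Rightarrow$(2) and Theorem \ref{thm_2_2} in one stroke), whereas you route through Theorem \ref{thm_2_1} plus uniqueness of the triplet; your added explicit check that $B_F\in\R$ is a point the paper leaves implicit in this direction.
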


According to Theorem \ref{thm_2_2}, 
when $F \in \mathcal{S}$ satisfies the GRH, 
if the central value of $F$ is nonzero, 
then $\exp(g_F(t))$ is the characteristic function of a compound Poisson distribution. 
Conversely,  if $F$ has a central zero, 
then $\exp(g_F(t))$ is the characteristic function of the convolution product 
of a Gaussian distribution and a compound Poisson distribution, 
and the multiplicity $m_0$ of the central zero 
is interpreted as the Gaussian covariance of the Gaussian distribution.  
Thus, the probabilistic property of $F$ described via $g_F$ changes 
depending on the presence of central zeros.

\subsection{Proofs of Theorems \ref{thm_2_1} and \ref{thm_2_2} } \label{section_2_3}
\begin{proof}[Proof of (1)$\Rightarrow$(2) in Theorem \ref{thm_2_1} and Proof of Theorem \ref{thm_2_2}] 
By definition \eqref{s204}, 
equality \eqref{Levy_2} defines a measure on $\R$ such that  
the formula 
\[
\aligned 
\exp(g_F(t)) 
& = \exp\left[ - \frac{m_0}{2}\,t^2
+ iB_F t + \int_{-\infty}^{\infty}
(e^{it\lambda}  - 1) \nu_F(d \lambda) \right] 
\endaligned 
\]
holds on $\R$, $\nu_F(\{0\})=0$ and 
$\int_{|\lambda| \leq 1}|\lambda|\nu_F(d\lambda)<\infty$, 
since all zeros $\gamma$ of $\xi_F(1/2-iz)$ are real by assumption (1).  
Moreover, $\int_{-\infty}^{\infty}\min(1,\lambda^2)\,\nu_F(d\lambda)<\infty$ 
by the convergence of $\sum_{\gamma\not=0} m_\gamma |\gamma|^{-2}$, 
which follows from the fact that $\xi_F(s)$ is an entire function of order one. 
Hence, there exists an infinitely divisible distribution $\mu$ 
whose characteristic function is given by $\exp(g_F(t))$ 
with the characteristic triplet $(a,b_0,\nu)=(m_0,B_F,\nu_F)$ 
by \cite[Theorem 8.1 (iii)]{Sa99}. 
\end{proof}

\begin{proof}[Proof of (2)$\Rightarrow$(1) in Theorem \ref{thm_2_1}] 
We have $\exp(g_F(t))=\widehat{\mu}(t)$ for some infinitely divisible 
distribution $\mu$ on the real line by assumption (2). 
Therefore, $\Re(g_F(t))$ is nonpositive on $\R$, since $|\widehat{\mu}(t)| \leq 1$. 
Next, we show that $F$ has no real zeros other than $s=1/2$ in a few steps.
First, for $F \in \mathcal{S}$, we define 
$F^\ast(s):=\overline{F(\bar{s})}$. 
Then, $F^\ast \in \mathcal{S}$ with 
$a_{F^\ast}(n)=\overline{a_F(n)}$, 
$m_{F^\ast}=m_F$, 
$\omega_{F^\ast}=\overline{\omega_F}$, 
$r_{F^\ast}=r_F$, 
$Q_{F^\ast}=Q_F$, 
$\lambda_{F^\ast,j}=\lambda_{F,j}$, and 
$\mu_{F^\ast,j}=\overline{\mu_{F,j}}$ 
if we write $\omega$, $Q$, $r$, $\lambda_j$, and $\mu_j$ in (S3) 
as $\omega_F$, $Q_F$, $r_F$, $\lambda_{F,j}$, and $\mu_{F,j}$, respectively. 
In particular, 
\begin{equation} \label{s206}
\xi_{F^\ast}(s) = \overline{\xi_F(\bar{s})}. 
\end{equation}
Further, the GRH for $F$ is equivalent to the GRH for $FF^\ast$ 
by \eqref{s202} and \eqref{s206}. 

By \eqref{s202} and \eqref{s206}, 
the mapping $\gamma \mapsto -\overline{\gamma}$ 
defines a bijection from the set of all zeros of $\xi_F(1/2-iz)$ 
to the set of all zeros of  $\xi_{F^\ast}(1/2-iz)$ with multiplicity, 
and $B_F=-B_{F^\ast}\,(\in \R)$. 
Applying these to \eqref{s204}, 
we get $\overline{g_F(t)} = g_{F^\ast}(t)$. 
Moreover, $F$ has a zero in $(1/2,\infty)$ 
if and only if $F^\ast$ has a zero in $(1/2,\infty)$ 
by \eqref{s202} and \eqref{s206}. 

Therefore, the absence of the zeros of $F$ in $(1/2,\infty)$ 
implies the absence of the zeros of $FF^\ast$ in $(1/2,\infty)$. 
As mentioned in Section \ref{section_2_1}, 
real zeros of $F$ in $(1/2,\infty)$ lie in $(1/2,1]$ if they exist. 
Let $\beta\,(>1/2)$ be the maximum of such zeros. 
Then $\xi_F(1/2-iz)$ has the zero $\gamma_0:=i(\beta-1/2)$. 
Therefore $|g_F(t)|$ grows exponentially by \eqref{s204}. 
However, the L{\'e}vy--Khintchine formula \eqref{LK_1} 
shows that $|g_F(t)|=|\log \widehat{\mu}(t)|$ is at most polynomial growth. 
This is a contradiction, and therefore $F$ has no real zeros except for $s=1/2$. 

On the other hand, we show that 
\begin{equation} \label{s207}
\aligned 
2\int_{0}^{\infty} & \Re(g_F(t)) e^{izt} \, dt 
= \int_{0}^{\infty} (g_F(t)+g_{F^\ast}(t)) e^{izt} \, dt \\
& = \frac{1}{z^2} 
\left[ 
\frac{\xi_F^\prime}{\xi_F}\left(\frac{1}{2}-iz \right)
+
\frac{\xi_{F^\ast}^\prime}{\xi_{F^\ast}}\left(\frac{1}{2}-iz \right)
\right] 
 = \frac{1}{z^2} 
\frac{\xi_{FF^\ast}^\prime}{\xi_{FF^\ast}}\left(\frac{1}{2}-iz \right)
\endaligned 
\end{equation}
holds if $\Im(z)>1/2$. 
Taking the logarithmic derivative of Hadamard's factorization formula 
\[
\xi_F(1/2-iz) = e^{A_F-B_Fz} z^{m_0} \prod_{\gamma \not=0} \left[\left(1-\frac{z}{\gamma} \right)
e^{\frac{z}{\gamma}} \right]^{m_\gamma}, 
\]
and then substituting $z=0$, we have $B_F=i(\xi_F^\prime/\xi_F)(1/2)$. 
Using 
\begin{equation} \label{s208}
\frac{i}{z^2}  \left( \frac{1}{z-\gamma} + \frac{1}{\gamma} \right)
=
\int_{0}^{\infty} \frac{e^{-it\gamma}-1}{\gamma^2}\, e^{izt} \, dt, 
\quad \Im(z)>\Im(\gamma)
\end{equation}
\begin{equation} \label{s209}
\int_{0}^{\infty} t \, e^{izt} \, dt = -\frac{1}{z^2}, 
\qquad 
\int_{0}^{\infty} \frac{t^2}{2} \, e^{izt} \, dt = -\frac{i}{z^3}, \quad \Im(z)>0, 
\end{equation}
we obtain
\begin{equation} \label{s210}
\int_{0}^{\infty} g_F(t) \,e^{izt} \, dt = \frac{1}{z^2}\frac{\xi_F'}{\xi_F}\left(\frac{1}{2}-iz\right), 
\quad \Im(z)>1/2 
\end{equation}
in the same way as the proof of \cite[Lemma 2.2]{NaSu23}. 
Thus \eqref{s207} follows from \eqref{s210}.  

As a result of the discussion above, 
by the same argument as in the proof of the implication (2)$\Rightarrow$(1) 
in \cite[Theorem 1.1]{NaSu23},  
we obtain the GRH  for $FF^\ast$ from the integral formula \eqref{s207} 
using the nonpositivity of $\Re(g_F(t))$ on $\R$ 
and the absence of the zeros of $F$ in $(1/2,\infty)$. 
\end{proof}

\section{Concluding remarks} \label{section_3}

The Selberg class is graded by degree, 
and it is known that the only elements of degree less than two  
are the Riemann zeta-function $\zeta(s)$ and Dirichlet $L$-functions $L(s,\chi)$ 
(\cite{Pe04, Pe05}). 
For the Riemann zeta-function, it is well-known that $\zeta(1/2)\not=0$. 
For Dirichlet $L$-functions, it is conjectured that 
the central value $L(1/2,\chi)$ is never zero (\cite[Section 1]{CS}). 
Therefore, it is expected the degree of elements of $\mathcal{S}$ 
with central zeros be at least two. 
And indeed, there are the following examples of $F \in \mathcal{S}$ 
of degree two having central zeros ($m_0>0$) and no other real zeros. 
\medskip

Let $S_k$ be the space of holomorphic cusp forms 
for the full modular group $SL_2(\Z)$ (\cite[Section 2.1]{Mi89}). 
For each $f(z)=\sum_{n=1}^{\infty} a_n \exp(2\pi i nz)$ in $S_k$, 
the $L$-function $L(s,f)$ is defined 
by the Dirichlet series $\sum_{n=1}^{\infty} a_n n^{-s}$ (\cite[(4.3.11)]{Mi89}), 
analytically continued to $\C$, 
and satisfies the functional equation 
$\Lambda(s,f)=i^k \Lambda(k-s,f)$ 
with $\Lambda(s,f)=(2\pi)^{-s}\Gamma(s)L(s,f)$ (\cite[Corollary 4.3.7]{Mi89}). 
Moreover, the estimate $|a_n|=O(n^{(k-1)/2})$ holds (\cite[Theorem 4.5.17]{Mi89}). 
Therefore, $F_f(s):=L(s+(k-1)/2)$ satisfies (S1), (S2), (S3), and (S4). 
We have $F_f(1/2)=0$ by the functional equation if $k \equiv 2$ mod $4$. 
Moreover, if ${\rm dim}\,S_k=1$, $F_f$ satisfies (S5) by \cite[Theorem 4.5.16]{Mi89},  
because $S_k$ is closed under the action of Hecke operators (\cite[Theorem 4.5.4 (3)]{Mi89}). 
The spaces $S_{18}$, $S_{22}$, and $S_{26}$ are one dimensional by \cite[Corollary 4.1.4]{Mi89}. 
Hence, $F_f$ for $f \in S_{k}$ ($k=18, 22, 26$) are examples of $F \in \mathcal{S}$ 
satisfying $F(1/2)=0$. 
By the product formula of $L(s,f)$ in \cite[Theorem 4.5.16]{Mi89}, 
$F_f(s) \not=0$ for $s \in (1, \infty)$. 
Choosing explicit generators 
$\Delta E_6 \in S_{18}$,  $\Delta E_4 E_6 \in S_{22}$, $\Delta E_4^2 E_6 \in S_{26}$, 
we can numerically check that $F_f(s) \not=0$ for $s \in (1/2, 1]$ (cf. \cite[Section 4.1]{Mi89}), 
so $F_f(s)\not=0$ for $s \in \R \setminus \{1/2\}$ by the functional equation, 
where $\Delta$ is the modular discriminant and $E_k$ are Eisenstein series as usual. 
\medskip

The study on infinite divisibility of characteristic functions induced by zeta or $L$-functions has a long history. In \cite[p.~67]{GK68}, it is shown that the distribution generated by $\zeta(\sigma+it)/\zeta (\sigma)$ is compound Poisson on $\R$ 
and
\begin{equation*} 
\frac{\zeta(\sigma+it)}{\zeta (\sigma)} = \exp \left[ \int_{-\infty}^\infty (e^{it\lambda} - 1) N_{\sigma} (d\lambda) \right], \quad
N_{\sigma} (d\lambda) := \sum_{p \in \mathbb{P}} \sum_{r=1}^\infty \frac{p^{-r\sigma}}{r} \delta_{-r\log p} (d\lambda),
\end{equation*}
where $\mathbb{P}$ denotes the set of all prime numbers. 
Lin and Hu generalized this result for $\sum_{n=1}^\infty c(n) n^{-s}$ 
with completely multiplicative nonnegative coefficients $c(n)$ in \cite[Theorem 2]{LH01}. Furthermore, 
Aoyama and Nakamura 
consider multidimensional $\eta$-tuple $\phi$-rank compound Poisson zeta distributions on $\R^d$ 
(Aoyama and Nakamura, 
Multidimensional polynomial Euler products and infinitely divisible distributions on $\R^d$, 
Theorem 3.8, 
\newblock{\url{https://arxiv.org/abs/1204.4041}). 
All zeta distributions appeared in \cite{GK68}, \cite{LH01}, and 
Aoyama--Nakamura are compound Poisson, namely, 
the corresponding Gaussian covariance and drift are zero in (\ref{LK_3}).

However, if $F \in \mathcal{S}$ has a real zero at $s=1/2$, 
the corresponding Gaussian covariance is positive. 
Note that when $F$ is the Riemann zeta-function, 
the corresponding Gaussian covariance and drift are zero (\cite[Theorem 1.2]{NaSu23}). 
In \cite[Section 2.4]{EbKa19}, there are many L\'evy  processes 
that may be related to modeling real-world phenomena such as logarithmic asset prices.
By using $L$-functions in the Selberg class, 
we can treat such wider L\'evy processes.

\medskip

\noindent
{\bf Acknowledgments}~
The first and second authors were supported by JSPS KAKENHI 
Grant Number JP22K03276 and JP17K05163--JP23K03050, respectively. 
This work was also supported by the Research Institute for Mathematical Sciences,
an International Joint Usage/Research Center located in Kyoto University.

%
%---------------------------------------

%---------------------------------------
%
\bigskip 

\noindent
Takashi Nakamura,\\[5pt]
Institute of Arts and Sciences, Noda Division, \\
Tokyo University of Science, \\ 
2641 Yamazaki, Noda-shi, \\ 
Chiba-ken
278-8510, Japan  \\[2pt]
Email: {\tt nakamuratakashi@rs.tus.ac.jp}
\bigskip

\noindent
Masatoshi Suzuki,\\[5pt]
Department of Mathematics, \\
Tokyo Institute of Technology \\
2-12-1 Ookayama, Meguro-ku, \\
Tokyo 152-8551, Japan  \\[2pt]
Email: {\tt msuzuki@math.titech.ac.jp}
%.......................................
\end{document}